\newtheorem{theorem}{Theorem}[section]
\newtheorem{proposition}[theorem]{Proposition}
\theoremstyle{definition}
\newtheorem{definition}[theorem]{Definition}
\theoremstyle{remark}
\title{\bf A Constructive Heuristic Sieve for the Twin Prime Problem}
\author{
    Yuhang Shi \\
    \small \texttt{lostangel1964@email.phoenix.edu} \\
    \small Xi'an Qing'an Senior High School
}
\date{July 1, 2025}
\begin{document}

\maketitle

\begin{abstract}
The quantitative distribution of twin primes remains a central open problem in number theory. This paper develops a heuristic model grounded in the principles of sieve theory, with the goal of constructing an analytical approximation for the twin prime constant from first principles. The core of this method, which we term ``$f(t; z)$ function analysis,'' involves representing the sieve's density product as a ratio of infinite series involving $f(t;z)$, the elementary symmetric polynomials of prime reciprocals. This framework provides a constructive path to approximate the celebrated Hardy-Littlewood constant for twin primes. We present a detailed and transparent numerical analysis based on verifiable code, comparing the truncated series approximation to empirical data. The limitations of the model, particularly a systematic overestimation and its dependence on series truncation, are rigorously discussed. The primary value of this work lies not in proposing a superior predictive formula, but in offering a clear, decomposable, and analytically tractable heuristic for understanding the multiplicative structure of sieve constants.
\end{abstract}

\section{Introduction}

The study of prime numbers has long captivated mathematicians. A twin prime pair is a pair of primes $(p, p+2)$. The Twin Prime Conjecture, which posits their infinitude, represents a specific case of de Polignac's broader conjecture from 1849 \cite{Polignac1849}. While the Prime Number Theorem furnishes a definitive asymptotic for the prime counting function $\pi(x)$ \cite{Hadamard1896, Poussin1896}, the corresponding result for the twin prime counting function, $\pi_2(x)$, remains elusive. The celebrated Hardy-Littlewood conjecture \cite{Hardy1923} offers a precise prediction:
$$ \pi_2(x) \sim 2C_2 \frac{x}{(\ln x)^2}, \quad \text{where} \quad C_2 = \prod_{p>2, p \text{ prime}} \left(1 - \frac{1}{(p-1)^2}\right) $$
is the twin prime constant.

Sieve methods, originating with Brun \cite{Brun1919}, provide powerful upper bounds but have not yet resolved the conjecture. However, the field has seen stunning progress, highlighted by the landmark result of Yitang Zhang, who proved the existence of a finite bound on gaps between primes \cite{Zhang2014}, a result rapidly improved upon by others \cite{Maynard2015}.

This paper charts an alternative heuristic course. We construct a model for $\pi_2(x)$ from the ground up, based on a sieve-theoretic framework. The core of our methodology, termed ``$f(t; z)$ function analysis,'' represents the sieve correction factor as a ratio of infinite series involving sums of prime reciprocals. This work culminates in our main result, the following heuristic approximation for the twin prime counting function:
\begin{equation}
    \pi_2(x) \sim \frac{2x}{(\ln x)^2} \cdot \frac{\sum_{t=0}^{\infty} (-2)^t f(t;z)}{\left(\sum_{t=0}^{\infty} (-1)^t f(t; z\right)^2}.
\end{equation}
The derivation of this formula, a detailed and candid assessment of its performance via truncated series, and a critical analysis of its intrinsic limitations form the central focus of this paper.

\section{Heuristic Sieve Framework}

A naive probabilistic argument suggests $\pi_2(x) \approx x/(\ln x)^2$. Sieve methods correct for this by analyzing the divisibility of $n(n+2)$ by small primes. For a prime $p>2$, for $n$ and $n+2$ to be a potential prime pair, we require $n \not\equiv 0 \pmod p$ and $n \not\equiv -2 \pmod p$. The standard heuristic for the density of twin primes leads to the twin prime constant $2C_2$:
$$ 2C_2 = 2 \prod_{p>2} \frac{p(p-2)}{(p-1)^2} = 2 \prod_{p>2} \frac{1-2/p}{(1-1/p)^2}. $$
Our model aims to approximate this constant by truncating the products at a sieving limit $z$ and analyzing the resulting expression.

\begin{proposition}[Heuristic Sieve Model]
We model the twin prime counting function as:
\begin{equation} \label{eq:model_base_prop}
\pi_2(x) \sim \mathcal{D}(z) \frac{x}{(\ln x)^2}, \quad \text{where} \quad \mathcal{D}(z) = 2 \prod_{3 \le p \le z} \frac{1-2/p}{(1-1/p)^2}.
\end{equation}
The core of our work is to develop a precise analytical approximation for the correction factor $\mathcal{D}(z)$ using our ``$f(t;z)$ function analysis.''
\end{proposition}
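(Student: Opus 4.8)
The plan is to derive the model \eqref{eq:model_base_prop} by a local-density heuristic, treating the divisibility conditions imposed by each small prime as approximately independent events and then assembling the resulting correction factor as a product over primes $p \le z$. The starting point is the naive probabilistic estimate: the Prime Number Theorem suggests that a random integer near $x$ is prime with ``probability'' $1/\ln x$, so if primality of $n$ and of $n+2$ were independent, one would expect $\pi_2(x) \approx x/(\ln x)^2$. The entire content of the proposition is the determination of the multiplicative correction $\mathcal{D}(z)$ that repairs this false independence assumption prime by prime.

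First I would isolate the prime $p=2$. Since $n$ and $n+2$ share the same parity, requiring both to be odd forces the single condition $n \equiv 1 \pmod 2$, which survives with density $1/2$, whereas a naive independent model would predict $(1-1/2)^2 = 1/4$. The ratio $\tfrac{1/2}{1/4} = 2$ is the source of the leading factor $2$ in \eqref{eq:model_base_prop}. Next, for each odd prime $p$, I would count admissible residue classes modulo $p$: for the pair $(n, n+2)$ to avoid the factor $p$ one needs $n \not\equiv 0$ and $n \not\equiv -2 \pmod p$, and because $p > 2$ these are two distinct classes, so exactly $p-2$ of the $p$ residues survive, giving local survival density $(p-2)/p = 1 - 2/p$. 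Dividing by the independent prediction $(1-1/p)^2$ yields the per-prime correction $\tfrac{1-2/p}{(1-1/p)^2}$, and invoking the Chinese Remainder Theorem to treat distinct primes as contributing independently, I would multiply these factors over $3 \le p \le z$ to obtain $\mathcal{D}(z)$.

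A short algebraic simplification, $\tfrac{1-2/p}{(1-1/p)^2} = \tfrac{p(p-2)}{(p-1)^2} = 1 - \tfrac{1}{(p-1)^2}$, then exhibits the limit $\mathcal{D}(z) \to 2C_2$ as $z \to \infty$, confirming consistency with the Hardy--Littlewood constant quoted in the introduction and closing the derivation of the stated form.

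The main obstacle --- and the reason the statement is heuristic rather than a theorem --- is the final transition from the density of integers surviving the sieve up to $z$ to the density of genuine prime pairs. Multiplicativity of residue counts via the Chinese Remainder Theorem is exact only for a fixed modulus, and the passage to all primes up to $z$, together with the identification of ``surviving the small-prime sieve'' with ``actually being prime,'' cannot be justified rigorously: it runs directly into the parity obstruction of sieve theory and the unproven independence of divisibility conditions across the full range of primes. I would therefore present these steps as a controlled heuristic, deferring any quantitative treatment of the dependence on $z$ and of the series truncation to the subsequent ``$f(t;z)$ function analysis.''
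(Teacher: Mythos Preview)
Your derivation is correct and follows the same heuristic logic the paper sketches in the paragraph immediately preceding the proposition (naive probabilistic baseline $x/(\ln x)^2$, the divisibility conditions $n \not\equiv 0, -2 \pmod p$, and the resulting product over primes). In fact the paper supplies no separate proof for this proposition at all---it is stated as a model definition with only that brief motivating paragraph---so your explicit account of the factor $2$ from the prime $p=2$, the Chinese Remainder Theorem justification for multiplicativity, and the candid discussion of the parity obstruction goes considerably beyond what the paper itself provides.
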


\section{The \texorpdfstring{$f(t;z)$}{f(t;z)} Function Analysis}

Our central method is to analyze the numerator and denominator of $\mathcal{D}(z)$ by expanding them into infinite series.

\begin{definition}[The $f(t;z)$ Functions]
Let $z \in \mathbb{R}^+$. We define $f(t;z)$ as the elementary symmetric polynomial of degree $t$ in the variables $\{1/p \mid p \text{ is an odd prime and } p \le z\}$.

\begin{equation}
    f(t;z) = \sum_{3 \le p_1 < p_2 < \dots < p_t \le z} \frac{1}{p_1 p_2 \dots p_t}.
\end{equation}

By convention, $f(0;z) = 1$.
\end{definition}

By expanding the product forms, we can formally express the components of $\mathcal{D}(z)$ as series:
$$ \text{Numerator Term:} \quad \prod_{3 \le p \le z} \left(1 - \frac{2}{p}\right) = 1 - 2\sum_{3 \le p \le z}\frac{1}{p} + 4\sum_{3 \le p_1<p_2 \le z}\frac{1}{p_1 p_2} - \dots= \sum_{t=0}^{\infty} (-2)^t f(t;z). $$
$$ \text{Denominator Term:} \quad \prod_{3 \le p \le z} \left(1 - \frac{1}{p}\right) = 1 - \sum_{3 \le p \le z}\frac{1}{p} + \sum_{3 \le p_1<p_2 \le z}\frac{1}{p_1 p_2} - \dots = \sum_{t=0}^{\infty} (-1)^t f(t;z). $$
This transforms the problem of estimating a product into one of summing series. The asymptotic behavior of $f(t;z)$ for large $z$ and fixed $t$ is known to be $f(t;z) \approx \frac{1}{t!}(\ln(\ln z)+M')^t$, where $M'$ is the Meissel-Mertens constant for odd primes. This asymptotic behavior provides theoretical insight, suggesting the series converge. However, it is crucial to note that this asymptotic is not used for our numerical calculations, which rely solely on the exact values of $f(t;z)$ to avoid approximation errors.

\begin{theorem}[Recursive $f(t)$ Analysis]
The asymptotic behavior of $f(t;z)$ for large $z$ can be determined recursively. Let $L(z) = \ln(\ln z)$ and $M' = M - 1/2 \approx -0.0718$ be the Meissel-Mertens constant for odd primes.
\begin{itemize}
    \item \textbf{Base Case ($t=1$):} $f(1;z) = \sum_{3 \le p \le z} 1/p \sim L(z) + M'$.
    \item \textbf{Recursive Step:} A key insight is that $f(t;z)$ can be related to $f(t-1;z)$.
    \begin{equation}
    f(t;z) = \sum_{p \le z} \frac{1}{p} f(t-1; p-1) \sim \frac{1}{t} \left( f(1;z)f(t-1;z) - \sum_{p \le z}\frac{f(t-2;p-1)}{p^2} \right).
    \end{equation}
\end{itemize}
This leads to the leading-order approximation $f(t;z) \approx \frac{1}{t!}(L(z)+M')^t$, where the combinatorial factor $1/t!$ arises naturally from the recursive structure to account for unordered sets of primes.
\end{theorem}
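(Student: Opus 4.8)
The plan is to prove the closed form $f(t;z)\sim \frac{1}{t!}(L(z)+M')^t$ by induction on the fixed degree $t$, letting $z\to\infty$ and writing $A(z):=L(z)+M'$ for brevity. The base case $t=1$ is exactly Mertens' second theorem applied to the odd primes: since $\sum_{p\le z}1/p=\ln\ln z+M+o(1)$, subtracting the single term $1/2$ contributed by $p=2$ gives $f(1;z)=\ln\ln z+M'+o(1)\sim A(z)$, which simultaneously pins down $M'=M-1/2$. This establishes the stated base case and fixes the normalisation that drives the entire recursion.

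Next I would establish the \emph{exact} recursion by partitioning the index set according to the largest prime. In the definition of $f(t;z)$, group the strictly increasing tuples $3\le p_1<\dots<p_t\le z$ by their maximal element $p_t=p$; the remaining factors then range over all $(t-1)$-subsets of odd primes strictly below $p$, that is over primes $\le p-1$. Factoring out $1/p$ yields $f(t;z)=\sum_{3\le p\le z}\frac{1}{p}\,f(t-1;p-1)$ with no error term, purely combinatorially. As a template for the general case I would record the $t=2$ instance directly: introducing the power sums $P_k(z):=\sum_{3\le p\le z}p^{-k}$, one has $f(1;z)^2=P_2(z)+2f(2;z)$, hence $f(2;z)=\tfrac12\big(f(1;z)^2-\sum_p p^{-2}\big)$ exactly, which is precisely the claimed asymptotic recursion in the case $t=2$.

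For general $t$ I would pass from power sums to elementary symmetric functions through Newton's identities, $t\,f(t;z)=\sum_{i=1}^{t}(-1)^{i-1}P_i(z)\,f(t-i;z)$. The first summand $P_1(z)f(t-1;z)=f(1;z)f(t-1;z)$ is the main term, and every later summand carries a factor $P_i(z)$ with $i\ge2$; since $\sum_p p^{-2}$ converges, each such $P_i(z)=O(1)$, so the tail is $O(f(t-2;z))=O(A^{t-2})$ by the inductive hypothesis, smaller than the main term $f(1;z)f(t-1;z)\asymp A^{t}$ by a factor $A^{-2}\to0$. The stated correction $\sum_p f(t-2;p-1)/p^2$ is one convenient representative of this lower-order remainder: it differs from the Newton remainder only by sums in which a prime exceeding $p$ is paired with the squared prime, and those sums are again $O(A^{t-2})$, so the two forms are interchangeable to leading order. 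Feeding $f(t-1;z)\sim A^{t-1}/(t-1)!$ and $f(1;z)\sim A$ into the recursion then gives $f(t;z)\sim \tfrac{1}{t}\cdot A\cdot \tfrac{A^{t-1}}{(t-1)!}=\tfrac{A^{t}}{t!}$, the telescoping of the successive $1/t$ factors producing the combinatorial $1/t!$.

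I expect the main obstacle to be bookkeeping rather than any deep analytic input: one must verify that the $O(A^{-2})$ relative error introduced at each step does not accumulate across the induction (it does not, since $t$ is fixed and each stage multiplies a single $(1+o(1))$ factor into a finite product), and one must be candid that the specific form $\sum_p f(t-2;p-1)/p^2$ is not identical to the exact Newton remainder but only asymptotically equal to it. A secondary subtlety worth flagging is uniformity: the argument treats $t$ as fixed while $A(z)=\ln\ln z+M'\to\infty$, so the expansion should not be read as valid when $t$ is permitted to grow with $z$.
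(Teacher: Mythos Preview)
Your proposal is correct and follows the same route as the paper's sketch: Mertens' theorem for $t=1$, the exact recursion $f(t;z)=\sum_{3\le p\le z} p^{-1}f(t-1;p-1)$ obtained by partitioning on the largest prime, and the Newton-type identity $2f(2;z)=f(1;z)^2-P_2(z)$ as the template. You in fact go further than the paper, which only works out $t=2$ explicitly and then asserts the general case, by invoking the full Newton identity $t\,f(t;z)=\sum_{i=1}^{t}(-1)^{i-1}P_i(z)\,f(t-i;z)$ to carry the induction through for arbitrary fixed $t$; your observation that the displayed correction $\sum_p f(t-2;p-1)/p^2$ is only one asymptotically equivalent representative of the true Newton remainder, and your caveat about non-uniformity in $t$, are both well taken and sharper than anything the paper records.
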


\begin{proof}[Sketch of Recursive Derivation for $f(2;z)$]
We can write $f(2;z)$ as $\sum_{p_i \le z} \frac{1}{p_i} \left( \sum_{p_j < p_i} \frac{1}{p_j} \right)$. This avoids overcounting.
$$ f(2;z) = \sum_{p \le z} \frac{1}{p} f(1; p-1) \sim \sum_{p \le z} \frac{1}{p} (L(p) + M') $$
Alternatively, and more simply, the identity $2f(2;z) = f(1;z)^2 - \sum_{p \le z} 1/p^2$ holds.
Let $S_k(z) = \sum_{3 \le p \le z} p^{-k}$. Then $2f(2;z) = S_1(z)^2 - S_2(z)$. Asymptotically, $S_1(z) \sim L(z)+M'$ and $S_2(z)$ converges to the prime zeta function for odd primes, $p_{odd}(2) = p(2) - 1/4$.
$$ f(2;z) \sim \frac{1}{2}((L(z)+M')^2 - p_{odd}(2)). $$
This recursive and combinatorial approach can be extended for all $t$, confirming the leading-order behavior $f(t;z) \sim \frac{1}{t!}(L(z)+M')^t$.
\end{proof}

\section{Corrected Main Formula and Numerical Analysis}

Our revised and mathematically consistent main formula directly incorporates the structure of $\mathcal{D}(z)$.

\begin{theorem}[Corrected Heuristic Formula]
The twin prime counting function can be approximated by:
\begin{equation}\label{eq:main_formula_theorem}
    \pi_2(x) \sim \frac{2x}{(\ln x)^2} \cdot \frac{\sum_{t=0}^{\infty} (-2)^t f(t; z)}{\left(\sum_{t=0}^{\infty} (-1)^t f(t; z)\right)^2},
\end{equation}
where $z$ is a suitably chosen sieving limit, such as $z=x^{1/4}$.
\end{theorem}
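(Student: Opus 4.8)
The plan is to derive the formula by direct substitution, combining the Heuristic Sieve Model of the Proposition with the two series identities established in Section 3. Once the model of the Proposition is granted, the entire argument is algebraic; the substantive content lives in the Proposition itself and in the $f(t;z)$ asymptotics, both of which I may assume. First I would recall that the Proposition furnishes $\pi_2(x) \sim \mathcal{D}(z)\, x/(\ln x)^2$ with $\mathcal{D}(z) = 2\prod_{3\le p\le z}\frac{1-2/p}{(1-1/p)^2}$, and rewrite this correction factor as a single numerator product over the square of a single denominator product,
\[
\mathcal{D}(z) = 2\,\frac{\prod_{3\le p\le z}(1-2/p)}{\bigl(\prod_{3\le p\le z}(1-1/p)\bigr)^2}.
\]

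The key step is to observe that for any fixed $z$ there are only finitely many odd primes $p\le z$, say $N = \pi(z)-1$ of them, so $f(t;z)=0$ for $t>N$ and the two ``infinite series'' are in fact finite sums. The identities
\[
\prod_{3\le p\le z}(1-2/p) = \sum_{t=0}^{\infty}(-2)^t f(t;z), \qquad \prod_{3\le p\le z}(1-1/p) = \sum_{t=0}^{\infty}(-1)^t f(t;z)
\]
are then nothing more than the standard expansion of a finite product into elementary symmetric polynomials (Vieta's relations), hence \emph{exact}, with no convergence issue arising at finite $z$. Substituting these two expressions into the displayed ratio for $\mathcal{D}(z)$, and that ratio in turn into the asymptotic relation of the Proposition, produces the claimed formula \eqref{eq:main_formula_theorem} term by term.

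I would then address the coupling $z = x^{1/4}$. As $x\to\infty$ the sieving limit $z\to\infty$, and I would invoke the Recursive $f(t)$ Analysis Theorem, under which $f(t;z)\sim \frac{1}{t!}(L(z)+M')^t$, to confirm that $\mathcal{D}(z)\to 2C_2$; consequently the right-hand side is asymptotically $2C_2\, x/(\ln x)^2$, consistent with the Hardy--Littlewood prediction. This is the step at which I would check that letting the truncation level $z$ grow with $x$ does not disturb the leading asymptotic, i.e.\ that the $z$-dependent constant stabilises faster than the $x/(\ln x)^2$ main term varies.

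The hard part---and the place where the argument is genuinely heuristic rather than rigorous---is the Proposition itself: the passage from a sieve density to an honest asymptotic equivalence $\sim$. Standard sieve machinery delivers only upper and lower bounds for $\pi_2(x)$, and the residual gap between them is the parity obstruction, which no elementary reweighting of the $f(t;z)$ terms can remove. I would therefore present the substitution as exact algebra \emph{conditional} on the model, and flag explicitly that the error control entangling $z$ and $x$---equivalently, the replacement of two-sided sieve bounds by a single asymptotic constant---is the one step that cannot be closed by these methods alone.
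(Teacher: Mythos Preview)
Your proposal is correct and follows essentially the same route as the paper: the theorem is not given a separate proof in the paper but is presented as the immediate consequence of substituting the two series identities of Section~3 into the Proposition's expression for $\mathcal{D}(z)$. Your additional observations---that the series are in fact finite Vieta expansions for fixed $z$, that the coupling $z=x^{1/4}$ drives $\mathcal{D}(z)\to 2C_2$, and that the genuinely heuristic step is the Proposition's asymptotic equivalence rather than the algebra---are all accurate refinements that the paper itself leaves implicit or defers to the limitations discussion.
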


\subsection{Numerical Verification and Analysis}

We now test this model with data generated from verifiable code. For each value of $x$, we set $z = \lfloor x^{1/4} \rfloor$, generate the list of odd primes up to $z$, and compute the exact values of $f(t;z)$ for $t \in [0, 4]$. The series are truncated at $t=4$ to compute an approximate correction factor $\mathcal{D}_{\text{approx}}(z)$, which is then used to calculate the final approximation $\pi_{2,\text{approx}}(x)$. The results are presented in Table \ref{tab:comparison}.

\begin{quote}
\small
\textbf{Code Availability and Reproducibility:} 
The complete computational implementation of this verification framework, 
including all algorithms and data generation routines, is permanently archived 
via Zenodo \cite{shi_pi2_2025}. This DOI-resolved record 
contains:
\begin{itemize}
    \item Executable Jupyter notebooks for interactive verification
    \item Version-controlled Python scripts (MIT licensed)
    \item Dependency specifications for full reproducibility
\end{itemize}
Researchers may independently validate all tabulated results through 
documented procedures at \url{https://doi.org/10.5281/zenodo.15857682}.
\end{quote}

\begin{table}[htbp]
\centering
\caption{Comparison of Twin Prime Counting Function Approximations (Verifiable Data)}
\label{tab:comparison}
\begin{tabular}{@{}lrrrrr@{}}
\toprule
$x$ & $\pi_2(x)$ (True) & $\pi_{2,\text{HL}}(x)$ & Rel. Err. (HL) & $\pi_{2,\text{approx}}(x)$ & Rel. Err. (This work) \\
\midrule
$10^4$ & 205 & 214 & +4.4\% & 161 & -21.5\% \\
$10^5$ & 1,224 & 1,249 & +2.0\% & 1,087 & -11.2\% \\
$10^6$ & 8,169 & 8,167 & -0.02\% & 11,978 & +46.6\% \\
$10^7$ & 58,980 & 58,754 & -0.38\% & 163,740 & +177.6\% \\
\bottomrule
\end{tabular}
\end{table}

\paragraph{Analysis of Results:}
The verifiable data in Table \ref{tab:comparison} reveal the true performance of our heuristic model.
\begin{enumerate}
    \item \textbf{Systematic Overestimation:} Unlike the erroneous data presented in a preliminary version of this work, the correct results show that our model, with the chosen parameters ($z=x^{1/4}$, truncation at $t=4$), systematically overestimates the twin prime count for larger $x$. The relative error is significant and grows with $x$.
    \item \textbf{Source of Discrepancy:} The overestimation likely arises from the truncation of the series. The correction factor $\mathcal{D}(z)$ is an approximation of $2C_2 \approx 1.32$. Our calculation shows that the truncated series yields a value for $\mathcal{D}_{\text{approx}}(z)$ that is larger than $2C_2$ and increases with $z$. For example, for $x=10^6$, our model calculates $\mathcal{D}_{\text{approx}}(31) \approx 1.91$, which is much larger than $1.32$. This indicates that the higher-order terms ($t>4$) in the series are crucial for reducing the value of the correction factor towards $2C_2$.
\end{enumerate}

\subsection{Model Limitations and Path Forward}
The primary value of this model is not its predictive accuracy in this truncated form, but its analytical structure. The discrepancy highlights the challenges of heuristic models:
\begin{itemize}
    \item \textbf{Truncation Sensitivity:} The model is highly sensitive to the truncation point. A more sophisticated analysis, perhaps including more terms or estimating the tail of the series, is required for better accuracy.
    \item \textbf{Choice of Sieving Limit:} The parameter $z=x^{1/4}$ is a simple choice. The model's performance may vary significantly with different choices of $z(x)$.
\end{itemize}
This honest assessment of the model's performance, based on verifiable data, is crucial for scientific integrity.

\section{Conclusion}

This paper has developed a heuristic sieve model for the twin prime counting function using a methodology we call ``$f(t;z)$ function analysis.'' Our primary contribution is the presentation of a transparent, constructive framework for approximating the twin prime constant.

A rigorous numerical verification, based on open-source code, reveals that the straightforward truncation of the model's core series leads to a significant overestimation of the twin prime count, contrary to erroneous data presented in a preliminary draft. This finding does not invalidate the theoretical framework but rather highlights its sensitivity to truncation. The model accurately shows that the twin prime density is governed by a complex interplay of prime reciprocals, but that a simple, low-order truncation is insufficient to capture the constant precisely.

The value of this work, therefore, lies in its methodology and transparency. It offers an analytical lens into the structure of sieve constants and serves as a clear case study on the limitations of simple heuristic models. Future work should focus on a more advanced analysis of the series' convergence and the optimal choice of the sieving limit $z$.

\bibliographystyle{plain}
\bibliography{main}

\end{document}